\definecolor{webgreen}{rgb}{0,.5,0}
\definecolor{webbrown}{rgb}{.6,0,0}
\theoremstyle{plain}
\newtheorem{theorem}{Theorem}
\newtheorem{corollary}[theorem]{Corollary}
\theoremstyle{definition}
\newtheorem{definition}[theorem]{Definition}
\newtheorem{example}[theorem]{Example}
\newtheorem*{burnside}{Burnside's lemma}
\newcommand{\fix}[1]{\mathsf{fix}(#1)}
\newcommand{\cS}{{\mathcal S}}
\newcommand{\cG}{{\mathcal G}}
\newcommand{\cF}{{\mathcal F}}
\newcommand{\inv}{{-1}}
\newcommand{\setst}[2]{ \left\{ #1 \mid #2 \right\} }
\newcommand{\set}[1]{\left\{#1\right\}} 
\newcommand{\seqnum}[1]{\href{https://oeis.org/#1}{\underline{#1}}}
\begin{document}
%
%
\begin{center}
\vskip 1cm{\LARGE\bf Orbits of Hamiltonian Paths and Cycles in Complete Graphs}
\vskip 1cm
\large 
Samuel Herman and Eirini Poimenidou\\
Division of Natural Sciences \\ 
New College of Florida\\
Sarasota, FL 34243\\
United States\\
\href{mailto:samuel.herman18@ncf.edu}{\tt samuel.herman18@ncf.edu} \\
\href{mailto:poimenidou@ncf.edu}{\tt poimenidou@ncf.edu} \\
\end{center}

\vskip .2 in


\begin{abstract}
We enumerate certain geometric equivalence classes of subgraphs induced by Hamiltonian paths and cycles in complete graphs. 
Said classes are orbits under the action of certain direct products of dihedral and cyclic groups on sets of strings representing subgraphs. 
These orbits are enumerated using Burnside's lemma. 
The technique used also provides an alternative proof of the formulae found by S. W. Golomb and L. R. Welch which give the number of distinct $n$-gons on fixed, regularly spaced vertices up to rotation and optionally reflection.
\end{abstract}


\section{Introduction}
All graphs in this paper are considered as their \textit{geometric realizations}, which are defined as follows: if a graph $G$ has $n$ vertices, its geometric realization is the figure obtained by first associating its vertices with $n$ regularly spaced points on a circle, then representing its edges as line segments between said points.
For example, the geometric realizations associated to the complete graphs $K_n$ for $3 \leq n \leq 6$ are
    \begin{center}
        \includegraphics[scale=0.9]{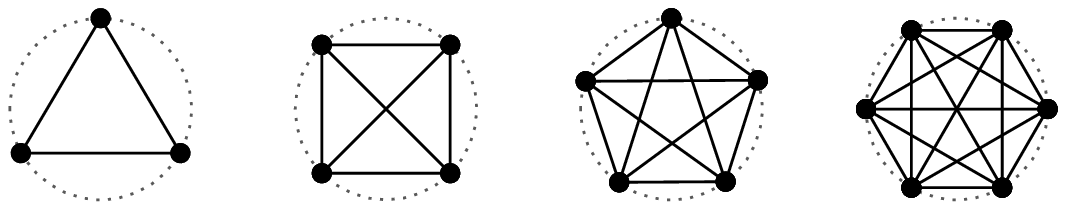}
    \end{center}

Recall that a \textit{Hamiltonian path} is a path in a graph which visits every vertex exactly once, and a \textit{Hamiltonian cycle} is a Hamiltonian path which is a cycle.
Any Hamiltonian path or cycle in a graph induces a subgraph whose vertex set is the same as the original graph, but whose edges consist of those traversed in the path or cycle; e.g.:
    \begin{center}
        \includegraphics{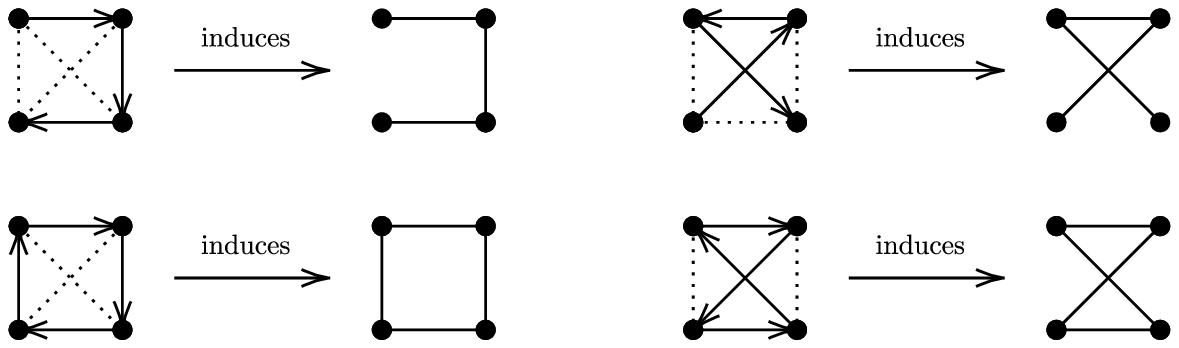}
    \end{center}

The investigation in this paper begins with a natural observation regarding the shapes of subgraphs induced by Hamiltonian paths in complete graphs. 
To see this observation for yourself, consider the set of subgraphs of $K_4$ induced by Hamiltonian paths which have an endpoint at the top left vertex:
    \begin{center}
        \includegraphics[]{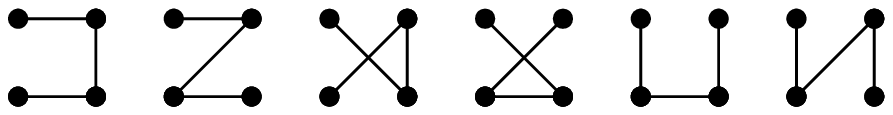}
    \end{center}
Notice that these subgraphs form one of just three distinct shapes---that is, any subgraph of $K_4$ induced by a Hamiltonian path is obtainable as a rotation or reflection of one of the following three graphs:
    \begin{center}
        \includegraphics[]{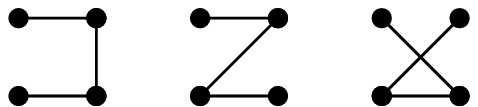}
    \end{center}
The analogous observation in the case of $K_5$ yields eight of these shapes:
    \begin{center}
        \includegraphics[]{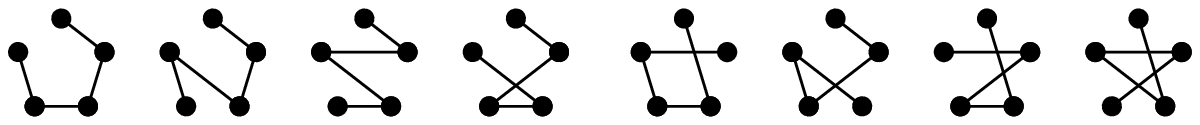}
    \end{center}
Furthermore, the analogous observations regarding subgraphs induced by Hamiltonian \textit{cycles} in $K_4$ and $K_5$ yield two and four of these shapes, respectively:
    \begin{center}
        \includegraphics[]{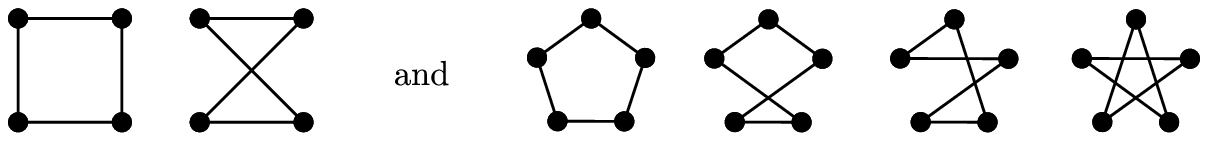}
    \end{center}

The natural inclination at this point is to ask whether there are formulae for enumerating the distinct shapes formed by the subgraphs induced by Hamiltonian paths or cycles in $K_n$. 
As it turns out, there are!
However, before we may show you, we must first make this question more precise.

\begin{definition}
Let $P_n$ and $C_n$ denote the sets of subgraphs of the complete graph $K_n$ which are induced by Hamiltonian paths or cycles, respectively.
Define the following equivalence relations on $P_n$ and $C_n$:
    \begin{enumerate}
        \item Two subgraphs $G_1, G_2$ are said to be \textit{similar}, denoted by $G_1 \equiv_S G_2$, if they are obtainable from one another by a rotation or reflection.
        \item Two subgraphs $G_1, G_2$ are said to be \textit{equivalent}, denoted by $G_1 \equiv_E G_2$, if they are obtainable from one another by a rotation (but \textit{not} a reflection).
    \end{enumerate}
\end{definition}

\begin{example}
\hfill
    \begin{center}
        \includegraphics[scale=1.1]{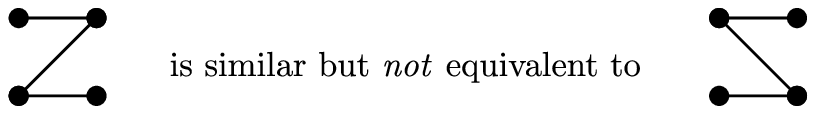}.
    \end{center}
\end{example}

This definition allows us to state our problem as one of enumerating the equivalence classes of either $P_n$ or $C_n$ under either $\equiv_S$ or $\equiv_E$, that is, we seek the sizes of the sets
    \[
        P_n / \equiv_S, \quad P_n / \equiv_E, \quad C_n / \equiv_S, \quad \text{and} \quad C_n / \equiv_E.
    \]
The sizes of these sets are given by
    \begin{align*}
    |P_n / \equiv_S| &= \frac{1}{4} \left[ (n-1)! +
        \begin{cases}
        (\frac{n}{2}+1)(n-2)!!], & \textnormal{if $n$ is even;}\\
        (n-1)!!], & \textnormal{if $n$ is odd.}
        \end{cases}
        \right],  \\
    |P_n / \equiv_E| &= \frac{1}{2} \left[ (n-1)! + 
        \begin{cases} 
        (n-2)!!], & \textnormal{if $n$ is even;} \\
        0, & \textnormal{if $n$ is odd.}
        \end{cases}
        \right],  \\
    |C_n / \equiv_S| &= \frac{1}{4n^{2}} \left[ \sum_{d | n} \left( \left(\phi\left(\frac{n}{d} \right) \right)^2  \left(\frac{n}{d}\right)^d  d! \right) +
        \begin{cases} 
        n!!\frac{n(n+6)}{4}, & \textnormal{if $n$ is even;}\\
        n^{2}(n-1)!!, & \textnormal{if $n$ is odd.}
        \end{cases}
        \right],  \\
    |C_n / \equiv_E| &= \frac{1}{2n^{2}} \left[ \sum_{d | n} \left( \left(\phi\left(\frac{n}{d} \right) \right)^2  \left(\frac{n}{d}\right)^d  d! \right) +
        \begin{cases} 
        \frac{n}{2}n!!, & \textnormal{if $n$ is even;} \\
        0, & \textnormal{if $n$ is odd.}
        \end{cases}
        \right],   
\end{align*}
where $n!!$ denotes the product of $n$ with every natural number less than $n$ of the same parity as $n$, i.e.,
    \[
        n!! = 
            \begin{cases}
                n (n-2) \cdots (2),  &\text{if $n$ is even;} \\
                n (n-2) \cdots (3) (1), &\text{if $n$ is odd.}
            \end{cases}
    \]
These formulae are proved in Theorems \ref{thmsp}, \ref{thmep}, \ref{thmsc}, and \ref{thmec} respectively. 
Further, illustrations of $P_n / \equiv_S$ and $C_n / \equiv_S$ for $3 \leq n \leq 6$ may be found in Figures \ref{fig:nspathsfig} and \ref{fig:nscyclesfig}, respectively.
Finally, Table \ref{tab:vals} gives the size of each of these sets for $3 \leq n \leq 10$.

    \begin{table}[]
        \centering
        \begin{tabular}{lcccc}
        \hline
        $n$ & $|P_n / \equiv_S|$ & $|P_n / \equiv_E|$ & $|C_n / \equiv_S|$ & $|C_n / \equiv_E|$ \\ \hline
        3   & 1                 & 1                 & 1                                & 1                 \\
        4   & 3                 & 4                 & 2                                & 2                 \\
        5   & 8                 & 12                & 4                                & 4                 \\
        6   & 38                & 64                & 12                               & 14                \\
        7   & 192               & 360               & 39                               & 54                \\
        8   & 1320              & 2544              & 202                              & 332               \\
        9   & 10176             & 20160             & 1219                             & 2246              \\
        10  & 91296             & 181632            & 9468                             & 18264             \\ \hline
        \end{tabular}
        \caption{Table of values for $3 \leq n \leq 10$.}
        \label{tab:vals}
    \end{table}
    

    \section{Setup}
    
We obtain these answers by converting the original problem into one of enumerating the orbits of a specific group action. 
These orbits are enumerated by way of Burnside's lemma, which is a standard tool in the theory of finite group actions.

    \begin{burnside} 
        Consider a group $G$ acting on a set $A$. 
        For each $g \in G$, let $\fix{g}$ denote the set of elements of $A$ which are fixed by $g$, i.e.,
            \[
                \fix{g} = \{ a \in A \; | \; g \cdot a = a \; \}.
            \]
        Let $A / G$ denote the set of orbits of this action. Then the number of orbits under the action of $G$ on $A$ is given by
            \[
                |A / G| = \frac{1}{|G|}\sum\limits_{g \in G} |\fix{g}|.
            \]
    \end{burnside}
    
We first define a set of strings which will represent the elements of $P_n$ or $C_n$.
\begin{definition}
Fix a labelling of the vertices of $K_n$ with the set $\mathbf{\bar{n}} = \{ 0, 1, \dots, n-1\}$, and let $X_n$ denote the set of $n$-length strings which are permutations of the elements of $\mathbf{\bar{n}}$, i.e., 
    \[
        X_n = \setst{x_0 x_1 \cdots x_{n-1}}{\text{$x_i \in \mathbf{\bar{n}}$ and $i \neq j \Rightarrow x_i \neq x_j$}}.
    \]
Note that $X_n$ has $n!$ elements.
\end{definition}

Next, we associate each string in $X_n$ with its interpretation as a graph in either $P_n$ or $C_n$ as follows. 

\begin{enumerate}
    \item $(X_n \longrightarrow P_n)$ Associate each string $(x_0 x_1 \cdots x_{n-1}) \in X_n$ with the subgraph of $K_n$ induced by the Hamiltonian path which traverses the vertices of $K_n$ in the order indicated by the string, i.e., the association is of the form
        \[
            (x_0 x_1 \cdots x_{n-1}) \longmapsto \langle x_0 \rightarrow x_1 \rightarrow \cdots \rightarrow x_{n-1} \rangle.
        \]
    This interpretation is illustrated by Figure \ref{fig:strings_paths}.
    Notice that both a string and its reversal are mapped to the same subgraph in $P_n$.
    \item $(X_n \longrightarrow C_n)$ Associate each string $(x_0 x_1 \cdots x_{n-1}) \in X_n$ with the subgraph of $K_n$ induced by the Hamiltonian cycle which traverses the vertices of $K_n$ in the order indicated by the string, i.e., the association is of the form
        \[
            (x_0 x_1 \cdots x_{n-1}) \longmapsto \langle x_0 \rightarrow x_1 \rightarrow \cdots \rightarrow x_{n-1} \rightarrow x_0 \rangle.
        \]
    This interpretation is illustrated by Figure \ref{fig:strings_cycles}.
    Notice that all cyclic permutations of a string as well as each of their reversals are mapped to the same subgraph in $C_n$.
\end{enumerate}

    \begin{figure}
        \centering
        \includegraphics{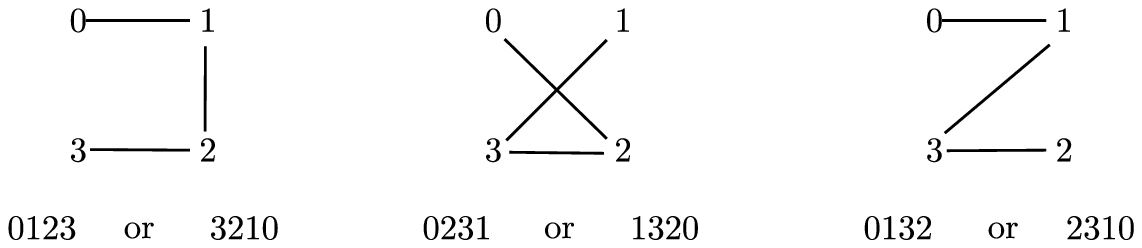}
        \caption{Interpretations of strings in $X_n$ as graphs in $P_n$.}
        \label{fig:strings_paths}
    \end{figure}
    
    \begin{figure}
        \centering
        \includegraphics{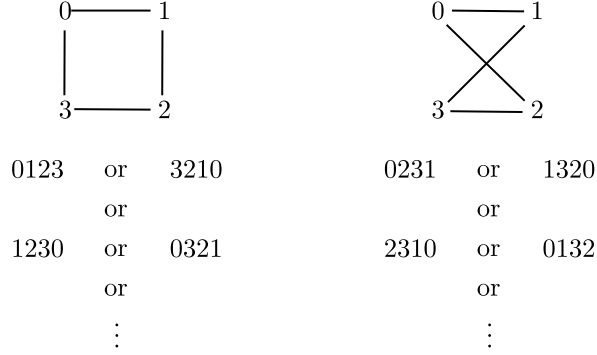}
        \caption{Interpretations of strings in $X_n$ as graphs in $C_n$.}
        \label{fig:strings_cycles}
    \end{figure}
    
Next, for each choice of $P_n$ or $C_n$ and $\equiv_S$ or $\equiv_E$, we define a group to act on $X_n$ such that the orbits of this action will coincide with the desired equivalence classes.
This group will be a direct product where the first coordinate acts purely on strings (i.e., an action of the first coordinate may only send strings to strings which have the same interpretation), while the second coordinate acts on a string's interpretation as a graph.

\begin{definition}
\hfill
\begin{enumerate}
    \item Define the following two groups which correspond to considering either $P_n$ or $C_n$:
        \begin{align*}
            \cS(P, n) & = \langle v \mid v^2 = 1 \rangle \\
                     & \text{and} \\
            \cS(C, n) & = \langle v, c \mid c^n = v^2 = 1, \; vcv = c^\inv \rangle.
        \end{align*}
    Note that $\cS(P,n)$ is isomorphic to the cyclic group of order 2, and $\cS(C,n)$ is isomorphic to the dihedral group of order $2n$.
    \item Define the following two groups which correspond to considering either $\equiv_S$ or $\equiv_E$:
        \begin{align*}
            \cG(\equiv_S, n) & = \langle r, s \mid r^n = s^2 = 1, \; srs = r^\inv \rangle \\
                   & \text{and} \\
            \cG(\equiv_E, n) & = \langle r \mid r^n = 1 \rangle.
        \end{align*}
    Note that $\cG(S,n)$ is isomorphic to the dihedral group of order $2n$, and $\cG(E,n)$ is isomorphic to the cyclic group of order $n$.
    \item Finally, given a choice of $\alpha = P_n, C_n$ and a choice of $\beta = \equiv_S, \equiv_E$, the acting group with respect to these choices is given by
        \[
            \Gamma(n, \alpha, \beta) = \cS(\alpha , n) \times \cG(\beta , n).
        \]
    For example, the acting group for $P_n$ under $\equiv_S$ is 
        \[
            \Gamma(n, P, \equiv_S) = \cS(P, n) \times \cG(\equiv_S, n).
        \]
\end{enumerate}

\end{definition}

\begin{definition}
The elements of $\Gamma(n, \alpha, \beta)$ act on strings in $X_n$ as follows:
    \begin{align*}
        (c, 1) \cdot (x_0 x_1 \cdots x_{n-1}) \; & = \; (x_1 \cdots x_{n-1} x_0), \\
        (v, 1) \cdot (x_0 x_1 \cdots x_{n-1}) \; & = \; (x_{n-1} \cdots x_1 x_0), \\
                                             & \text{and} \\
        (1, r) \cdot (x_0 x_1 \cdots x_{n-1}) \; & = \; \overline{(x_0 + 1)} \;\overline{(x_1 + 1)} \cdots \overline{(x_{n-1} + 1)}, \\
        (1, s) \cdot (x_0 x_1 \cdots x_{n-1}) \; & = \; (-x_0) (-x_1) \cdots (-x_{n-1}),
    \end{align*}
where $\overline{(x_i + 1)}$ denotes the sum $(x_i + 1)$ taken modulo $n$, and $(-x_i)$ denotes the (additive) inverse of $x_i$ modulo $n$.
The correct geometric interpretations of the actions of the second component are illustrated in Figure \ref{fig:action_strings}. 
\end{definition}

\begin{figure}
    \centering
    \includegraphics{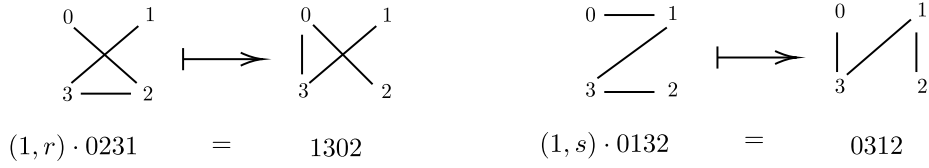}
    \caption{Geometric interpretation of the action of $\cG(S, n)$ on strings in $X_n$.}
    \label{fig:action_strings}
\end{figure}

This action has the following important properties:
    \begin{enumerate}
        \item Two strings in $X_n$ have the same interpretation as graphs in $P_n$ or $C_n$ if and only if they are contained in the same orbit under the action of $\cS(P, n)$ or $\cS(C, n)$ on $X_n$, respectively.
        \item If two strings in $X_n$ are contained in the same orbit under the action of $\cG(\equiv_S, n)$ or $\cG(\equiv_E, n)$, then their interpretations as graphs are similar or equivalent, respectively.
    \end{enumerate}
Considering these properties in the context of Burnside's lemma yields the observation that the equivalence classes of $P_n$ or $C_n$ under $\equiv_S$ or $\equiv_E$ correspond bijectively to the orbits of $X_n$ under the appropriate acting group.
In particular, we have the following:
    \begin{align*}
        |P_n / \equiv_S| &= |X_n / \Gamma(n, P, \equiv_S) | = \frac{1}{4n}\sum\limits_{g \in \Gamma(n, P, \equiv_S)} |\fix{g}|, \\
        |P_n / \equiv_E| &= |X_n / \Gamma(n, P, \equiv_E) | = \frac{1}{2n}\sum\limits_{g \in \Gamma(n, P, \equiv_E)} |\fix{g}|, \\
        |C_n / \equiv_S| &= |X_n / \Gamma(n, C, \equiv_S) | = \frac{1}{4n^2}\sum\limits_{g \in \Gamma(n, C, \equiv_S)} |\fix{g}|, \\
        |C_n / \equiv_E| &= |X_n / \Gamma(n, C, \equiv_E) | = \frac{1}{2n^2}\sum\limits_{g \in \Gamma(n, C, \equiv_E)} |\fix{g}|. \\
    \end{align*}


    \section{The path cases}


We begin by considering the cases involving $P_n$. 
This is because these cases turn out to be considerably simpler than those involving $C_n$, and thus they provide a suitable starting point for our investigation.
We first enumerate the classes of $P_n / \equiv_S$, and the size of $P_n / \equiv_E$ will follow as corollary.

 \begin{theorem} \label{thmsp}
    Let $n \geq 3$ be an integer. Then the number of equivalence classes of $P_n$ under $\equiv_S$ is given by
        \[
            |P_n / \equiv_S| = \frac{1}{4} \left[ (n-1)! +
                \begin{cases}
                    (\frac{n}{2}+1)(n-2)!!], & \textnormal{if $n$ is even;}\\
                    (n-1)!!], & \textnormal{if $n$ is odd.}
                \end{cases}
                \right]
        \]
    \end{theorem}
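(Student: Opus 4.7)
The plan is to apply Burnside's lemma to the action of $\Gamma(n, P, \equiv_S) = \cS(P, n) \times \cG(\equiv_S, n)$ on $X_n$. Since this group has order $4n$, the setup in the previous section already gives $|P_n / \equiv_S| = \frac{1}{4n} \sum_{g \in \Gamma(n, P, \equiv_S)} |\fix{g}|$, so the task reduces to computing $|\fix{g}|$ for each of the $4n$ group elements. I would organize them into four families indexed by $i \in \{0, 1, \dots, n-1\}$: $(1, r^i)$, $(1, sr^i)$, $(v, r^i)$, and $(v, sr^i)$.

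The first two families are easy. The element $(1, r^i)$ acts by $x_j \mapsto x_j + i \pmod{n}$, so it fixes a permutation only when $i \equiv 0 \pmod{n}$. The element $(1, sr^i)$ acts by $x_j \mapsto -x_j - i \pmod{n}$, and the resulting equation $2 x_j \equiv -i \pmod{n}$ has at most two solutions in $\mathbf{\bar{n}}$, insufficient to support a permutation when $n \geq 3$. Together these two families contribute only $|\fix{(1, 1)}| = n!$.

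The family $(v, r^i)$ I would handle by exploiting $(v, r^i)^2 = (1, r^{2i})$: any fixed string is also fixed by $(1, r^{2i})$, forcing $2i \equiv 0 \pmod{n}$. The case $i = 0$ requires $x_j = x_{n-1-j}$, impossible for a permutation, and when $n$ is even the remaining case $i = n/2$ gives the condition $x_j - x_{n-1-j} \equiv n/2 \pmod{n}$; this pairs the $n/2$ position pairs $(j, n-1-j)$ with the $n/2$ value pairs $\{a, a + n/2\}$, yielding $2^{n/2}(n/2)! = n!!$ fixed strings.

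The family $(v, sr^i)$ I would analyze via the condition $x_j + x_{n-1-j} \equiv -i \pmod{n}$, using the involution $\tau_i : a \mapsto -i - a$ on $\mathbf{\bar{n}}$. When $n$ is odd, $\tau_i$ has a unique fixed point, which is forced into the middle position $j = (n-1)/2$; the remaining $n-1$ values split into $(n-1)/2$ pairs and give $(n-1)!!$ fixed strings per $i$, for a total of $n(n-1)!!$. When $n$ is even, $\tau_i$ is fixed-point-free iff $i$ is odd, each such $i$ contributing $n!!$ for a total of $(n/2)n!!$; for even $i$ the two fixed points of $\tau_i$ cannot be placed at paired positions, giving $0$. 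Assembling the totals, dividing by $4n$, and using the identity $n!! = n(n-2)!!$ in the even case yields the stated formula. The main obstacle is the parity-of-$n$ bookkeeping throughout, particularly the even-versus-odd-$i$ dichotomy in the $(v, sr^i)$ analysis when $n$ is even.
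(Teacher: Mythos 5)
Your proposal is correct and follows essentially the same route as the paper: Burnside's lemma applied to the same four families $(1,r^i)$, $(1,sr^i)$, $(v,r^i)$, $(v,sr^i)$, with the same fixed-point counts $n!$, $0$, $n!!$ (for even $n$), and $(n/2)n!!$ or $n(n-1)!!$ respectively. Your local arguments (the at-most-two-solutions observation for $(1,sr^i)$, the squaring trick for $(v,r^i)$, and the involution $\tau_i$ framing for $(v,sr^i)$) are slightly cleaner packagings of the paper's congruence manipulations, but the underlying computation is identical.
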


\begin{proof}

Notice that each element of $\Gamma(n, P, \equiv_S)$ may be expressed in exactly one of the forms
    \[
        (1, r^k), \; (1, sr^k), \; (v, r^k), \; (v, sr^k)
    \]
for some integer $0 \leq k \leq n-1$.
Considering this fact in the context of Burnside's lemma yields the observation that
    \[
        |X_n / \Gamma(n, P, \equiv_S)| = \frac{1}{4n} ( A_1 + A_2 + A_3 + A_4 ),
    \]
where
    \[
        A_1 = \sum_{k=0}^{n-1} |\fix{1, r^k}|, \quad A_2 = \sum_{k=0}^{n-1} |\fix{1, sr^k}|, \quad A_3 = \sum_{k=0}^{n-1} |\fix{v, r^k}|, \quad A_4 = \sum_{k=0}^{n-1} |\fix{v, sr^k}|.
    \]
We now evaluate each of these sums.

    \begin{enumerate}
    \item
Clearly $(1, r^k)$ will fix $(x_0 x_1 \cdots x_{n-1})$ only when $k=0$.
Hence $A_1 = |\fix{1, 1}| = n!$.
    \item
For each $0 \leq k \leq n-1$, if the action of $(1, sr^{k})$ fixes the string $(x_0 x_1 \cdots x_{n-1})$, then we must have $x_i \equiv_n -x_i - k$ for all $0 \leq i \leq n-1$, and so $2x_i + k \equiv_n 0$. 
But since there is some $x_j$ such that $x_j = 0$, it follows that $k=0$ and so $sr^{k} = s$, which will clearly fix no strings.
Thus $A_2 = 0$.
    \item 
Notice that $(v, r^k)$ will fix $(x_0 x_2 \cdots x_{n-1})$ if and only if $x_i \equiv_n x_{-(i+1)} + k$ and $x_{-(i+1)} \equiv_n x_i + k$ for all $0 \leq i \leq n-1$. 
This implies that $x_i \equiv_n x_i + 2k$ and thus that $2k \equiv_n n$. 
Hence $n$ must be even and, since $(v, 1)$ will clearly fix no strings, we have $k = n/2$. 

Now, $x_i \equiv_n x_{-(i+1)} + n/2$ implies that $x_i - x_{-(i+1)} \equiv_n n/2$. 
Hence, for each of the $(n/2)!$ bijections between the sets of compatible pairs of indices $\set{i, -(i+1)}$ and compatible pairs of labels $\set{x_i, x_{-(i+1)}}$:
    \begin{align*}
        P_{\mathsf{ind}} &= \set{ \set{0, n-1}, \set{1, n-2}, \dots, \set{\frac{n}{2}-1, \frac{n}{2}} } \\
        \updownarrow & \\
        P_{\mathsf{lab}} &= \set{ \set{0, \frac{n}{2}}, \set{1, \frac{n}{2} + 1}, \dots, \set{\frac{n}{2}-1, n-1} };
    \end{align*}
we obtain $2^{n/2}$ strings, for $2^{n/2} \cdot (n/2)! = n!!$ fixed strings in total; that is, we have
    \[ 
        A_3 = 
            \begin{cases}
                n!!, & \textnormal{if $n$ is even;}\\
                0, & \textnormal{if $n$ is odd.}
            \end{cases}
    \]
    \item
Notice that $(v, sr^{k})$ will fix $(x_0 x_1 \cdots x_{n-1})$ if and only if  $x_i \equiv_n -(x_{-(i+1)} + k)$ for all $0 \leq i \leq n-1$. 

First note that if $k$ is even, then there is some entry $x_j$ such that $x_j \equiv_n -k/2$.
Hence
    \[
        x_j \equiv_n -k/2 \equiv_n -x_{-(j+1)} - k,
    \]
which implies that $x_j \equiv_n x_{-(j+1)}$, and thus that $n$ must be odd.

We must consider the following cases.
    \begin{enumerate}
        \item
    If $n$ is even and $k$ is odd, then for each of $n/2$ possible values of $k$ and each of the $(n/2)!$ bijections between the sets of pairs of compatible indices and pairs of compatible labels:
    \begin{align*}
        P_{\mathsf{ind}} &= \set{ \set{0, n-1}, \set{1, n-2}, \dots, \set{\frac{n}{2}-1, \frac{n}{2}} } \\
        \updownarrow & \\
        P^{(k)}_{\mathsf{lab}} &= \set{ \set{n - \frac{k-1}{2}, n-\frac{k+1}{2}}, \dots, \set{n-1, n-k+1} };
    \end{align*}
    we have $2^{n/2}$ fixed strings, for a total of $(n/2) n!!$ fixed strings for this case.
        \item
    If $n$ is odd and $k$ is even, we have $2x_{\frac{n-1}{2}} \equiv_n -k$, and thus $x_{\frac{n-1}{2}} = n - \frac{k}{2}$.
    Hence our set of pairs of compatible indices is a pairing of the set $\mathbf{\bar{n}} - \set{\frac{n-1}{2}}$, and our set of pairs of compatible labels is a pairing of the set $\mathbf{\bar{n}} - \set{n - \frac{k}{2}}$.
    
    So for each even value of $k$ and each of the $(\frac{n-1}{2})!$ bijections between
    \begin{align*}
        P_{\mathsf{ind}} &= \set{ \set{0, n-1}, \set{1, n-2}, \dots, \set{\frac{n-3}{2}, \frac{n+1}{2}} } \\
        \updownarrow & \\
        P^{(k)}_{\mathsf{lab}} &= \bigg\{ \set{0, n-k}, \dots, \set{\frac{n-k-1}{2}, \frac{n-k+1}{2} }, \set{n-k+1, n-1},\\
        & \quad \dots, \set{n-1-\frac{k}{2}, n + 1 - \frac{k}{2}} \bigg\},
    \end{align*}
    we have $2^{\frac{n-1}{2}}$ fixed strings, yielding $(n-1)!!$ fixed strings in total for this case.
        \item
    If $n$ and $k$ are both odd, as above we have $2x_{\frac{n-1}{2}} \equiv_n -k$, and thus $x_{\frac{n-1}{2}} = \frac{n-k}{2}$.
    Hence our set of pairs of compatible indices is again a pairing of the set $\mathbf{\bar{n}} - \set{\frac{n-1}{2}}$, and our set of pairs of compatible labels is a pairing of the set $\mathbf{\bar{n}} - \set{\frac{n-k}{2}}$.
    
    So, as above, for each odd value of $k$ and each of the $(\frac{n-1}{2})!$ bijections between
    \begin{align*}
        P_{\mathsf{ind}} &= \set{ \set{0, n-1}, \set{1, n-2}, \dots, \set{\frac{n-3}{2}, \frac{n+1}{2}} } \\
        \updownarrow & \\
        P^{(k)}_{\mathsf{lab}} &= \bigg\{ \set{0, n-k}, \dots, \set{\frac{n-k}{2} - 1, \frac{n-k}{2} + 1 }, \set{n-k+1, n-1},\\
        & \quad \dots, \set{n-\frac{k-1}{2}, n - \frac{k+1}{2}} \bigg\},
    \end{align*}
    we have $2^{\frac{n-1}{2}}$ fixed strings, again yielding $(n-1)!!$ fixed strings in total for this case.
    Hence over all $n$ possible values of $k$ we have a total of $n(n-1)!!$ fixed odd-length strings.
    \end{enumerate}
Thus we obtain
    \[
        A_4 = 
            \begin{cases}
                (\frac{n}{2})n!!, & \textnormal{if $n$ is even;}\\
                n(n-1)!!, & \textnormal{if $n$ is odd.}
            \end{cases}
    \]
\end{enumerate}
Having evaluated each of these sums, the desired theorem now follows.
\end{proof}

Since $\Gamma(n, P, \equiv_E)$ is a subgroup of $\Gamma(n, P, \equiv_S)$, the number of equivalence classes of $P_n$ under $\equiv_E$ follows as an easy corollary.

    \begin{theorem} \label{thmep}
    Let $n \geq 3$ be a integer. Then the number of equivalence classes of $P_n$ under $\equiv_E$ is given by
        \[
            |P_n / \equiv_E| = \frac{1}{2} \left[ (n-1)! + 
                \begin{cases} 
                    (n-2)!!], & \textnormal{if $n$ is even;} \\
                    0, & \textnormal{if $n$ is odd.}
                \end{cases}
            \right]
        \]
    \end{theorem}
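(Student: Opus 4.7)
The plan is to observe that $\Gamma(n, P, \equiv_E) = \cS(P, n) \times \cG(\equiv_E, n)$ is the subgroup of $\Gamma(n, P, \equiv_S)$ consisting of those elements whose second coordinate involves no reflection $s$. Concretely, every element of $\Gamma(n, P, \equiv_E)$ may be expressed uniquely in one of the forms $(1, r^k)$ or $(v, r^k)$ for some $0 \leq k \leq n-1$, so the group has order $2n$ and Burnside's lemma gives
\[
    |X_n / \Gamma(n, P, \equiv_E)| = \frac{1}{2n}(A_1 + A_3),
\]
where $A_1 = \sum_{k=0}^{n-1}|\fix{1, r^k}|$ and $A_3 = \sum_{k=0}^{n-1}|\fix{v, r^k}|$ are the same two sums that appeared in the proof of Theorem \ref{thmsp}.

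Next, since these fix-set cardinalities depend only on how the acting element moves strings in $X_n$ (not on the ambient group in which it is considered), I would invoke the values established in that proof directly: $A_1 = n!$, and $A_3 = n!!$ when $n$ is even and $0$ when $n$ is odd. Substituting into the Burnside sum yields
\[
    |X_n / \Gamma(n, P, \equiv_E)| = \frac{1}{2n}\left[ n! + \begin{cases} n!!, & \text{if $n$ is even;} \\ 0, & \text{if $n$ is odd} \end{cases} \right].
\]

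Finally, I would simplify using $n! / n = (n-1)!$ and $n!! / n = (n-2)!!$ (the latter from the definition $n!! = n \cdot (n-2)!!$ for $n$ even), which pulls the factor of $1/n$ inside the bracket and yields exactly the claimed formula.

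There is no real obstacle here; the only thing worth pointing out explicitly is that computing $|\fix{g}|$ for $g \in \Gamma(n, P, \equiv_E)$ gives the same number as computing it for the same $g$ viewed as an element of $\Gamma(n, P, \equiv_S)$, since in both cases the group acts on $X_n$ by the same formulas. This observation is what justifies reusing $A_1$ and $A_3$ verbatim from the preceding proof, and is the reason the theorem is, as advertised, an easy corollary.
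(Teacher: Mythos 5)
Your proposal is correct and is precisely the argument the paper intends: it notes only that $\Gamma(n, P, \equiv_E)$ is a subgroup of $\Gamma(n, P, \equiv_S)$ and leaves the rest as an "easy corollary," which you have filled in by reusing $A_1$ and $A_3$ from the proof of Theorem \ref{thmsp} and dividing by $2n$ instead of $4n$. Your explicit remark that $|\fix{g}|$ depends only on how $g$ acts on $X_n$, not on the ambient group, is exactly the right justification for that reuse.
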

    


\begin{figure}
    \centering
    \includegraphics[scale=1.2]{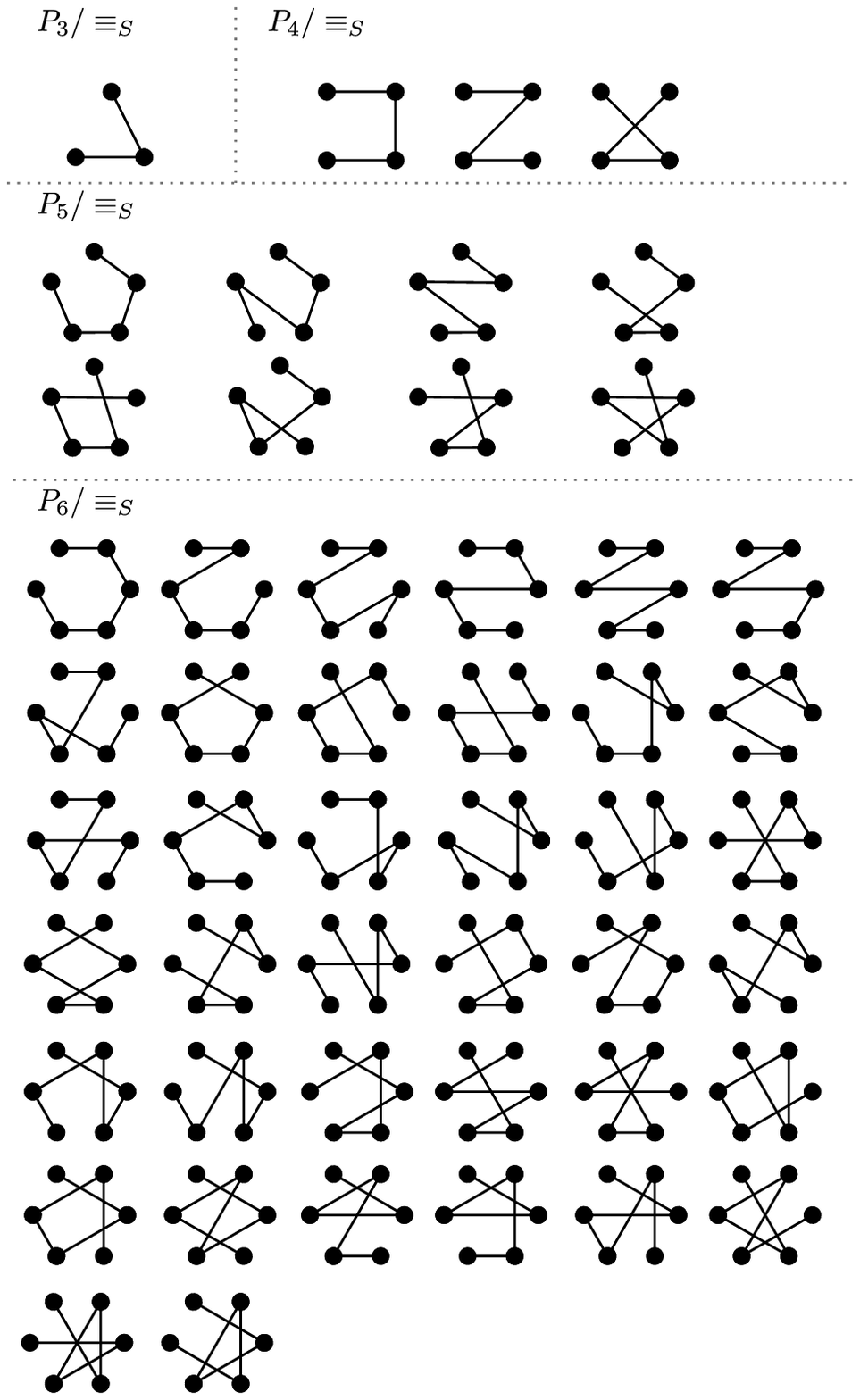}
    \caption{Representatives from each class of $P_n / \equiv_S$ for $3 \leq n \leq 6$.}
    \label{fig:nspathsfig}
\end{figure}

\begin{figure}
    \centering
    \includegraphics[scale=1.2]{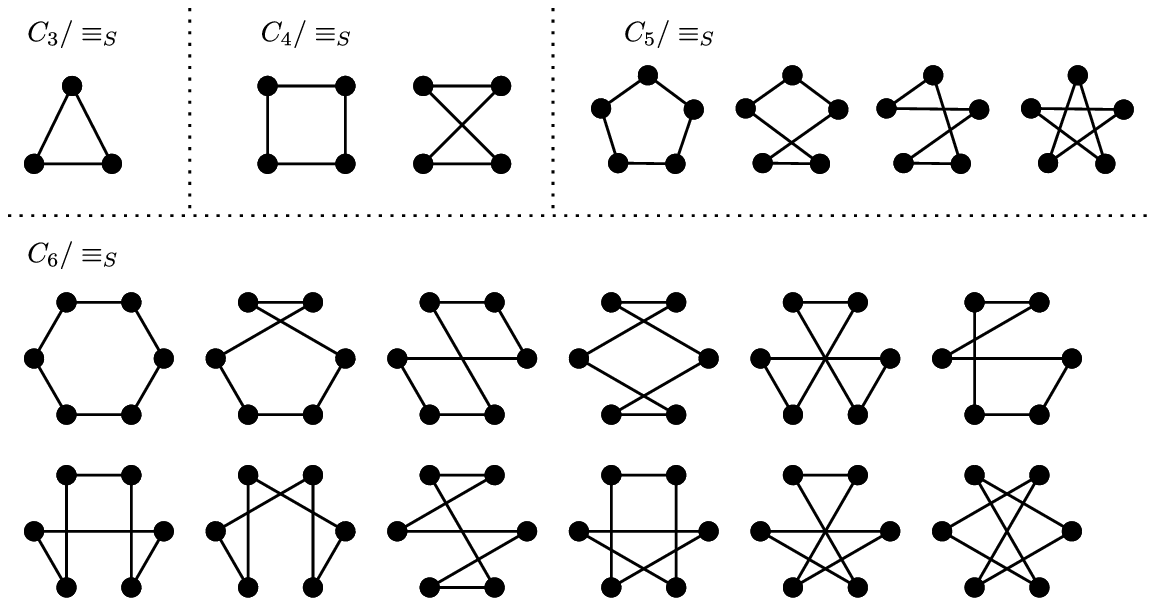}
    \caption{Representatives from each class of $C_n / \equiv_S$ for $3 \leq n \leq 6$.}
    \label{fig:nscyclesfig}
\end{figure}

    

    \section{The cycle cases}


We now turn to the more difficult problem of enumerating equivalence classes of $C_n$. 
As before, we begin by enumerating $C_n / \equiv_S$, and the size of $C_n / \equiv_E$ will follow as corollary.

\begin{theorem} \label{thmsc}
    Let $n \geq 3$ be an integer. Then the number of equivalence classes of $C_n$ under $\equiv_S$ is given by
        \[
            |C_n / \equiv_S| = \frac{1}{4n^{2}} \left[ \sum_{d | n} \left( \left(\phi\left(\frac{n}{d} \right) \right)^2  \left(\frac{n}{d}\right)^d  d! \right) +
                \begin{cases}
                n!!\frac{n(n+6)}{4}, & \textnormal{if $n$ is even;}\\
                n^{2}(n-1)!!, & \textnormal{if $n$ is odd.}
                \end{cases}
            \right]    
        \]
\end{theorem}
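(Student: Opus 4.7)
The plan is to apply Burnside's lemma to the action of $\Gamma(n, C, \equiv_S) = \cS(C,n) \times \cG(\equiv_S,n)$, a group of order $4n^2$, on $X_n$. Every element of this group may be written uniquely as one of $(c^j, r^k)$, $(c^j, sr^k)$, $(vc^j, r^k)$, or $(vc^j, sr^k)$ with $0 \le j, k \le n-1$, so Burnside expresses the desired count as
\[
|X_n/\Gamma(n,C,\equiv_S)| = \frac{1}{4n^2}(A_1 + A_2 + A_3 + A_4),
\]
where $A_i$ collects $|\fix{g}|$ over the group elements of the $i$-th form. My target will be to show that $A_1$ produces the divisor sum and $A_2 + A_3 + A_4$ produces the case-dependent term in the statement.

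For $A_1$, I observe that $(c^j, r^k)$ fixes $(x_0 \cdots x_{n-1})$ precisely when $x_{i+j} \equiv x_i - k \pmod n$ for every $i$. Setting $d = \gcd(j,n)$, the shift $i \mapsto i+j$ partitions indices into $d$ orbits of length $n/d$, and along each such orbit the values trace out a coset of the cyclic subgroup $\langle k \rangle \le \mathbb{Z}/n$. The permutation condition forces $\gcd(k,n) = d$ exactly, so that $\langle k \rangle$ has order $n/d$ and there are precisely $d$ cosets to distribute among the $d$ index-orbits. Counting fixed strings then reduces to choosing a bijection from index-orbits to cosets ($d!$ ways) together with a seed value in each assigned coset ($(n/d)^d$ ways); multiplying by the $\phi(n/d)$ admissible values of $j$ and the $\phi(n/d)$ of $k$ and summing over divisors $d$ of $n$ will give $A_1 = \sum_{d\mid n} \phi(n/d)^2\, d!\, (n/d)^d$.

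I plan to compute the remaining three sums by pairing arguments generalizing the path-case treatment in the proof of Theorem~\ref{thmsp}, now parametrized by the extra shift $j$. For $A_2$, the fixed-point equation $x_i + x_{i+j} \equiv -k$ forces $x$ to be constant on the orbits of $i \mapsto i+2j$, so the permutation requirement isolates $j = n/2$ (for $n$ even), and then only $k$ odd survives (else the labels with $2x \equiv -k$ have no partner), giving $A_2 = \tfrac{n}{2}\, n!!$ for $n$ even and $0$ otherwise. For $A_3$, the equation $x_i \equiv x_{\tau_j(i)} + k$ with $\tau_j(i) = j - 1 - i \pmod n$ forces $2k \equiv 0$, while any fixed index of $\tau_j$ would additionally force $k = 0$, clashing with the permutation condition on $2$-orbits; only $n$ even, $j$ even, $k = n/2$ survives, giving $A_3 = \tfrac{n}{2}\, n!!$ for $n$ even. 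For $A_4$, the condition $x_i + x_{\tau_j(i)} \equiv -k$ together with $2x_{i^*} \equiv -k$ at each fixed index $i^*$ of $\tau_j$ calls for a parity split on $(n, j, k)$ directly analogous to case~(4) of the proof of Theorem~\ref{thmsp}, now enlarged by the $n$ possible values of $j$; the bookkeeping should produce $n^2 (n-1)!!$ for $n$ odd and $\tfrac{n^2(n+2)}{4}(n-2)!!$ for $n$ even.

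The hard part will be this case split in $A_4$ for even $n$: I must track in parallel whether $\tau_j$ has fixed indices (controlled by the parity of $j$), whether $2x_{i^*} \equiv -k$ is solvable (controlled by the parity of $k$), and whether the self-pair labels $\{-k/2,\, -k/2 + n/2\}$ obstruct the pairing of the remaining indices with label-pairs $\{a, -k-a\}$, while correctly accounting for the factor of $2$ coming from the two seed values available at each fixed index. Once the four sums are assembled, the identity $n \cdot (n-2)!! = n!!$ should collapse $A_2 + A_3 + A_4$ for even $n$ to $n!! \cdot \tfrac{n(n+6)}{4}$ and leave $n^2 (n-1)!!$ for $n$ odd, and dividing by $4n^2$ will recover the formula in the statement.
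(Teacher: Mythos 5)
Your proposal is correct and follows essentially the same route as the paper: Burnside's lemma on the order-$4n^2$ product group, a divisor-sum/coset argument for the $(c^j,r^k)$ family yielding $\sum_{d\mid n}\phi(n/d)^2 (n/d)^d\, d!$, and pairing arguments for the other three families, with your claimed subtotals ($\tfrac{n}{2}n!!$, $\tfrac{n}{2}n!!$, and $\tfrac{n(n+2)}{4}n!!$ for even $n$; $n^2(n-1)!!$ for odd $n$) matching the paper's $B_2$, $B_3$, $B_4$ exactly. The only caveat is that your $A_4$ bookkeeping is sketched rather than executed, but the case split you identify (parity of $j$ and $k$, fixed indices of $\tau_j$) is precisely the one the paper carries out.
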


\begin{proof}
Notice that each element of $\Gamma(n, C, \equiv_S)$ may be expressed in exactly one of the forms
    \[
        (c^m, r^k), \; (c^m, sr^k), \; (c^mv, r^k), \; (c^mv, sr^k)
    \]
for some integers $0 \leq k, m \leq n-1$.
Considering this fact in the context of Burnside's lemma yields the observation that
    \[
        |X_n / \Gamma(n, C, \equiv_S)| = \frac{1}{4n^{2}} (B_1 + B_2 + B_3 + B_4),
    \]
where
    \begin{align*}
        & B_1 = \sum_{k,m=0}^{n-1} |\fix{c^m, r^k}|,  &B_2 = \sum_{k,m=0}^{n-1} |\fix{c^m, sr^k}|, \\\
        & B_3 = \sum_{k,m=0}^{n-1} |\fix{c^m v, r^k}|, &B_4 = \sum_{k,m=0}^{n-1} |\fix{c^m v, sr^k}|.
    \end{align*}
As before, we proceed to evaluate each of these sums.

\begin{enumerate}
    \item
    Notice that $(c^m, r^k)$ fixes $(x_0 x_1 \dots x_{n-1})$ if and only if $x_i \equiv_n x_{i+m} + k$ for all $0 \leq i \leq n-1$, and so $x_i \equiv_n x_{i+\ell m} + \ell k$ for all $0 \leq i \leq n-1$ and all $\ell \geq 0$.
    
    In particular, for $\ell = \frac{n}{\gcd(n,m)}$, we must have that $n \mid \ell m$ and so $x_i = x_{i + \ell m}$.
    Hence $x_i \equiv_n x_i + \ell k$ and $\ell k \equiv_n 0$; that is, $n \mid \ell k$ and thus $\gcd(n,m) \mid k$.
    Consequently, we have $\gcd(n,m) \mid \gcd(n,k)$.
    Similarly, for $\ell = \frac{n}{\gcd(n,k)}$, we have $n \mid \ell m$, implying that $\gcd(n,k) \mid m$ and hence $\gcd(n,k) \mid \gcd(n,m)$.
    
    We conclude that $\gcd(n,k) = \gcd(n,m) = d$ for some divisor $d$ of $n$, and therefore
        \begin{equation} \label{eq:fixeqdn}
            \sum_{\mathclap{0 \leq k, m < n}}|\fix{c^m, r^k}| = \sum_{d | n} \sum_{\substack{\gcd(k, n) = d \\ \gcd(m, n) = d}} |\fix{c^m, r^k}|.
        \end{equation}

    Now, fix some particular $k, m, d$ with $d = \gcd(n,k) = \gcd(n,m)$.
    We seek to determine the size of $\fix{c^m, r^k}$.
    Both of $r^k$ and $c^m$ have order $n/d$, and hence, if $(c^m, r^k)$ fixes $(x_0 x_1 \dots x_{n-1})$, then 
        \[
            x_i \equiv_n x_{i + \ell m} + \ell k \quad \text{for all $0 \leq i \leq n-1$ and $0 \leq \ell \leq \frac{n}{d}-1$.} 
        \]
    Hence, each choice of label $x_i$ determines the labels of all positions of the form $x_{i + \ell m}$ for $0 \leq \ell \leq \frac{n}{d}-1$.
    
    Note that, for $t \in \set{0, 1, \dots, n-1}$ with $\gcd(t, n) = d$ and $\alpha \in \set{0, \dots, d-1}$, the elements of the set
        \[
            \cF^{(t)}_\alpha = \setst{\alpha + \ell t}{0 \leq \ell \leq \frac{n}{d}-1}
        \]
    are all congruent to $\alpha$ modulo $d$ yet are all distinct modulo $n$ \cite{moser}.
    Setting $t = k, m$, we see that the set $\set{0, 1, \dots, n-1}$ may be partitioned in two different ways via $\Pi_m$ and $\Pi_k$, where
        \begin{align*}
            \Pi_m &= \set{ \set{0, m, \dots, (\frac{n}{d} -1)m}, \dots,  \set{d-1, d-1+m, \dots, 1 + d-(\frac{n}{d} -1)m }    }, \\
            &\text{and} \\
            \Pi_k &= \set{ \set{0, k, \dots, (\frac{n}{d} -1)k}, \dots,  \set{d-1, d-1+k, \dots, 1 + d-(\frac{n}{d} -1)k }    }.
        \end{align*}
    Consequently, the label of $x_i$ determines all labels with indices in the set $\cF^{(m)}_i$ such that all such labels are contained within a unique $\cF^{(k)}_{\alpha}$.
    In fact, we see that the labels of the initial substring $(x_0 x_1 \dots x_{d-1})$ completely determine the labelling of the rest of the string.
    
    Hence, for each of the $d!$ bijections between $\Pi_m$ and $\Pi_k$, each of the $(n/d)^d$ possible sets of choices of labelling $i \mapsto x_i \in \cF^{(k)}_{i}$ determines a unique fixed string.
    That is, for each particular valid $k, m, d$, we have
        \[
            |\fix{c^m, r^k}| = \left(\frac{n}{d}\right)^d d!.
        \]
    
    Combining this with (\ref{eq:fixeqdn}), we obtain
        \begin{align*}
        \begin{split} 
            B_1 = \sum_{\mathclap{k,m=1}}^{n-1}|\fix{c^m, r^k}| &= \sum_{d | n} \sum_{\substack{\gcd(k, n) = d \\ \gcd(m, n) = d}} |\fix{c^m, r^k}, \\
                      &= \sum_{d | n} \left(\frac{n}{d}\right)^d d! \sum_{\gcd(k, n) = d} \sum_{\gcd(m, n) = d} 1, \\
                      &= \sum_{d | n} \left(\phi\left(\frac{n}{d} \right) \cdot \phi\left(\frac{n}{d} \right) \cdot \left(\frac{n}{d}\right)^d \cdot d! \right),
        \end{split}
        \end{align*}
    where $\phi$ denotes Euler's totient function.
    \item
    Notice that $(c^m, sr^k)$ fixes $(x_0 x_1 \dots x_{n-1})$ if and only if $-x_{i+m} - k \equiv_n x_i$ for all $0 \leq i \leq n-1$.
    Thus we have $x_{i+m} + x_i \equiv_n -k$, and so $x_{i+2m} + x_{i+m} \equiv_n -k$, which implies that $x_i \equiv_n x_{i+m}$ for all $0 \leq i \leq n-1$, and hence $m=0$ or $m = n/2$.
    But if $m = 0$, then from the evaluation of $A_2$ above, no strings will be fixed, and so we must have $m = n/2$ and $n$ must be even.
    
    Hence we have that $x_i + x_{i+ \frac{n}{2}} \equiv_n -k$.
    Note that $k$ cannot be even, since, as $n$ is even, $-k$ would also be even, and so we would have $x_i = -k/2 = x_{i+\frac{n}{2}}$, which cannot be the case since $c^\frac{n}{2}$ fixes no points.
    
    Thus $k$ must be odd, and so for all $n/2$ odd choices of $k$ we have $(n/2)!$ bijections
    \begin{align*}
        P_{\mathsf{ind}} &= \set{ \set{0, \frac{n}{2}}, \set{1, \frac{n}{2} + 1}, \dots, \set{\frac{n}{2}-2, n-2}, \set{\frac{n}{2}-1, n-1} } \\
        \updownarrow & \\
        P^{(k)}_{\mathsf{lab}} &= \set{ \set{0, -k}, \set{1, -(k+1)}, \dots, \set{ \frac{k-1}{2}, \frac{k+1}{2}} },
    \end{align*}
    each of which affords $2^{(n/2)}$ fixed strings.
    Therefore we obtain
        \[
            B_2 = 
                \begin{cases}
                \frac{n}{2}n!!, &\textnormal{if $n$ is even;}\\
                0, &\textnormal{if $n$ is odd.}
                \end{cases}
        \]
    \item
    Notice that $(c^mv, r^k)$ fixes $(x_0 x_1 \dots x_{n-1})$ if and only if $x_{-(i+1) + m} + k \equiv_n x_i$ for all $0 \leq i \leq n-1$, which is the case if and only if $x_{-(i+1)} + k \equiv_n x_{i-m}$ for all $x_i$.
    There are three cases.
        \begin{enumerate}
            \item If $n$ is odd, then for all values of $m$ there exists a unique $0 \leq a \leq n-1$ such that $-(a+1) \equiv_n a-m$, and hence $x_{-(a+1)} = x_{a-m}$ and consequently $k=0$.
            But, since all other entries of the string are moved, it follows that no odd-length strings will be fixed by $(c^mv, r^k)$.
            \item If $n$ is even and $m$ is odd, then there exist exactly two indices $0 \leq a, b \leq n-1$ such that $-(a+1) \equiv_n a-m$ and $-(b+1) \equiv_n b-m$, and hence $x_{-(a+1)} = x_{a-m}$ and $x_{-(b+1)} = x_{b-m}$.
            Then, just as above, we have $k=0$ and thus no strings will be fixed.
            \item If both $n$ and $m$ are even, then, since there must be some $x_i = 0$, each of $n/2$ choices of $m$ will fully determine the value of $k$.
            Hence, in the same manner as before, for each $m$ we consider the $(n/2)!$ bijections between
                \begin{align*}
                    P^{(m)}_{\mathsf{ind}} &= \set{ \set{0, n-1+m}, \set{1, n-2+m}, \dots, \set{\frac{n}{2}-1, \frac{n}{2}+m} } \\
                    \updownarrow & \\
                    P^{(m)}_{\mathsf{lab}} &= \set{ \set{0, -k}, \set{1, 1-k}, \dots, \set{\frac{n}{2}-1, \frac{n}{2}-1 -k} };
                \end{align*}
            each of which, as before, yields $2^{\frac{n}{2}}$ fixed strings, for a total of $n!!$ fixed strings for each value of $m$.
        \end{enumerate}
    Hence we obtain
        \[
            B_3 = 
                \begin{cases}
                \frac{n}{2}n!!, &\textnormal{if $n$ is even;}\\
                0, &\textnormal{if $n$ is odd.}
                \end{cases}
        \]
    \item
    Notice that $(c^m v, sr^k)$ fixes $(x_0 x_1 \dots x_{n-1})$ if and only if $-(x_{-(i+1) +m} + k) \equiv_n x_i$ for all $0 \leq i \leq n-1$.
    There are three cases.
    \begin{enumerate}
        \item If $n$ is odd, then it is tedious but not difficult to see that an analogous argument to the evaluation of $A_4$ in the proof of Theorem \ref{thmsp} applies for all $n$ values of $m$, yielding a total of $n^2(n-1)!!$ fixed strings.
        \item If $n, m$ are both even, then it is again not difficult to see that an analogous argument to the evaluation of $A_4$ in the proof of Theorem \ref{thmsp} applies for all $n/2$ even values of $m$.
        That is, each even choice of $m$ allows for $n/2$ odd values of $k$, each of which affords $n!!$ fixed strings.
        \item If $n$ is even and $m$ is odd, then there are exactly two indices $0 \leq a, b \leq n-1$ such that $2a \equiv_n 2b \equiv_n m+1$.
        It follows that $m-a-1 \equiv_n a$ and $m-b-1 \equiv_n b$, and so  $-x_a - k \equiv_n x_a$ and $-x_b - k \equiv_n x_b$.
        Hence $2x_a \equiv_n 2x_b \equiv_n -k$, and so $k$ must be even.
        
        Now, for particular values of $m, a, b$, there are $n/2$ possible even values of $k$.
        Each choice of $k$ fixes the values of $\set{x_a, x_b}$, which may be ordered in two ways; and, by the same methods as before, both choices of ordering afford $(n-2)!!$ fixed strings. 
        Hence, in this case we have $(n/2) \cdot (n/2) \cdot 2 \cdot (n-2)!! = (n/2)n!!$ fixed strings in total.
    \end{enumerate}
    Thus we obtain
        \[
            B_4 = 
                \begin{cases}
                (\frac{n}{2}+1)\frac{n}{2}n!!, &\textnormal{if $n$ is even;}\\
                n^2(n-1)!!, &\textnormal{if $n$ is odd.}
                \end{cases}
        \]
\end{enumerate}

Having evaluated each of these sums, the desired theorem now follows.
\end{proof}

As before, since $\Gamma(n, C, \equiv_E)$ is a subgroup of $\Gamma(n, C, \equiv_S)$, the number of equivalence classes of $C_n$ under $\equiv_E$ follows as an easy corollary.

\begin{theorem} \label{thmec}
    Let $n \geq 3$ be an integer. Then the number of equivalence classes of $C_n$ under $\equiv_E$ is given by
        \[
            |C_n / \equiv_E| = \frac{1}{2n^{2}} \left[ \sum_{d | n} \left( \left(\phi\left(\frac{n}{d} \right) \right)^2  \left(\frac{n}{d}\right)^d  d! \right) +
                \begin{cases}
                \frac{n}{2}n!!, & \textnormal{if $n$ is even;} \\
                0, & \textnormal{if $n$ is odd.}
                \end{cases}
            \right]    
        \]
    \end{theorem}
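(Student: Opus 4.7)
The plan is to exploit the observation already highlighted by the authors: $\Gamma(n, C, \equiv_E) = \cS(C,n) \times \cG(\equiv_E, n)$ is a subgroup of $\Gamma(n, C, \equiv_S) = \cS(C,n) \times \cG(\equiv_S, n)$, obtained by dropping every element whose second coordinate contains $s$. So rather than starting over, I would directly invoke Burnside's lemma for this smaller group and harvest the relevant pieces from the proof of Theorem~\ref{thmsc}.

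Concretely, first I would note that every element of $\Gamma(n, C, \equiv_E)$ may be written uniquely as either $(c^m, r^k)$ or $(c^m v, r^k)$ for some $0 \leq k, m \leq n-1$, since $\cG(\equiv_E, n) = \langle r \rangle$ contains no reflection. Applying Burnside's lemma with $|\Gamma(n, C, \equiv_E)| = 2n^2$ then gives
\[
    |C_n / \equiv_E| = |X_n / \Gamma(n, C, \equiv_E)| = \frac{1}{2n^2}(B_1 + B_3),
\]
where $B_1$ and $B_3$ are precisely the sums $\sum_{k,m=0}^{n-1} |\fix{c^m, r^k}|$ and $\sum_{k,m=0}^{n-1} |\fix{c^m v, r^k}|$ evaluated in the proof of Theorem~\ref{thmsc}.

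Next I would simply quote those evaluations: $B_1 = \sum_{d|n} (\phi(n/d))^2 (n/d)^d d!$ in all cases, while $B_3 = \frac{n}{2} n!!$ for $n$ even and $B_3 = 0$ for $n$ odd. Substituting these into $\frac{1}{2n^2}(B_1 + B_3)$ yields exactly the claimed formula, so the theorem follows with no further computation.

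There is no real obstacle here; the only thing that could go wrong is a mismatch in which sums belong to the subgroup, but since $\Gamma(n, C, \equiv_E)$ is defined precisely by excluding the $s$-containing second coordinates, the summands $B_2$ and $B_4$ (which involve $sr^k$) drop out cleanly. The remainder is bookkeeping — confirming $|\Gamma(n, C, \equiv_E)| = |\cS(C,n)| \cdot |\cG(\equiv_E,n)| = 2n \cdot n = 2n^2$ — and the corollary is immediate.
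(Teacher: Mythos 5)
Your proposal is correct and matches the paper's approach exactly: the paper likewise derives Theorem~\ref{thmec} as an immediate corollary of Theorem~\ref{thmsc} via the subgroup $\Gamma(n, C, \equiv_E) \leq \Gamma(n, C, \equiv_S)$, with only the sums $B_1$ and $B_3$ surviving. In fact you spell out the bookkeeping (the order $2n^2$ and the exclusion of the $sr^k$ summands) more explicitly than the paper does.
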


Finally, it is worth noting a small corollary to the above theorems. 
Since $\phi(p) = (p-1)$ for any prime $p$, we have the following.

\begin{corollary}
    Let $p>2$ be prime. Then 
        \[
            |C_p / \equiv_S| = \frac{1}{4p}\left[ (p-1)^{2} + p(p-1)!! + (p-1)! \right],
        \]
    and
        \[
            |C_p / \equiv_E| = \frac{1}{2p}[(p-1)^{2} + (p-1)!].
        \]
\end{corollary}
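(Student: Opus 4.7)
The plan is to obtain both formulas as direct specializations of Theorems \ref{thmsc} and \ref{thmec} at $n = p$. Since $p > 2$ is prime, $p$ is odd, so the appropriate case branches in those theorems are the odd ones, giving a correction term of $p^2(p-1)!!$ in the similar case and $0$ in the equivalent case.

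First I would evaluate the Euler-totient sum $S(p) = \sum_{d \mid p} \phi(p/d)^2 (p/d)^d d!$. Because $p$ is prime, its only divisors are $1$ and $p$, so the sum has exactly two terms: the $d=1$ term contributes $\phi(p)^2 \cdot p^1 \cdot 1! = p(p-1)^2$, and the $d=p$ term contributes $\phi(1)^2 \cdot 1^p \cdot p! = p!$. Hence $S(p) = p(p-1)^2 + p!$.

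Next I would substitute into Theorem \ref{thmsc}: the odd-$n$ branch yields
\[
|C_p / \equiv_S| = \frac{1}{4p^2}\bigl[ p(p-1)^2 + p! + p^2 (p-1)!! \bigr],
\]
and factoring $p$ out of each bracketed summand (using $p! / p = (p-1)!$) collapses this to $\frac{1}{4p}\bigl[(p-1)^2 + (p-1)! + p(p-1)!!\bigr]$, which is the claimed expression. The analogous substitution into Theorem \ref{thmec} gives $|C_p/\equiv_E| = \frac{1}{2p^2}\bigl[p(p-1)^2 + p!\bigr] = \frac{1}{2p}\bigl[(p-1)^2 + (p-1)!\bigr]$, matching the second claim.

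There is no real obstacle here; the corollary is a routine arithmetic simplification once one observes that primality makes the divisor sum collapse to two explicit terms. The only place one might slip is remembering to use the odd-$n$ branches of the theorems and to keep the factor $p^2$ in the denominator straight when canceling against the $p$ appearing in $p(p-1)^2$ and the $p$ absorbed from $p! = p \cdot (p-1)!$.
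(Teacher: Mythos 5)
Your proposal is correct and matches the paper's (implicit) derivation exactly: the paper simply notes $\phi(p)=p-1$ and specializes Theorems \ref{thmsc} and \ref{thmec} at $n=p$, with the divisor sum collapsing to the two terms $p(p-1)^2$ and $p!$ and the odd-$n$ branches supplying the remaining terms. Your arithmetic and the cancellation of one factor of $p$ from the $\frac{1}{4p^2}$ (resp.\ $\frac{1}{2p^2}$) prefactor are all correct.
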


\section{Further Remarks}

Here we note some interesting connections which the authors noticed over the course of writing this paper. 
After completing the enumeration of $P_n / \equiv_S$, we discovered that there are exactly as many of them as there are \textit{tone rows} in $n$-tone music---the enumeration of which may be found in a paper of Reiner \cite{rein}.
The corresponding OEIS sequence is sequence \seqnum{A099030}---which, as has been noted, is identical to sequence \seqnum{A089066}.

Further, for reasons which should be clear, there are exactly as many classes in $C_n / \equiv_S$ as there are classes of similar $n$-gons (that is, classes of $n$-gons which are equivalent up to rotations and reflections). 
These classes---as well as the analogous case of $n$-gons equivalent up to rotations only---were enumerated in a 1960 paper of Golomb and Welch \cite{golomb}.
As such, this paper provides an alternative proof of their result. 
The corresponding OEIS sequences are \seqnum{A000940} and \seqnum{A000939}, respectively.
It should also be noted that the evaluation of $B_1$ in the proof of Theorem \ref{thmsc} is in large part an adaptation of an argument of Moser \cite{moser}.
In particular, the sum of Euler $\phi$ terms which makes an appearance in this paper as well as in the paper of Golomb and Welch \cite{golomb} is the same as that which appears in the case of $a=1$ in Moser's paper \cite{moser}. 
This connection is (as far as the authors are aware) not yet noted anywhere.

\section{Acknowledgements}

The authors would like to thank Chris Kottke for providing valuable feedback on several early versions this paper.
We would also like to thank the anonymous reviewer for his or her constructive suggestions, many of which were incorporated into this final version.
Finally, the first author would like to thank Nika Sigua for his role in the initial discovery of the problems discussed in this paper.


\bigskip
\hrule
\bigskip

\noindent 2010 {\it Mathematics Subject Classification}:
Primary 05C30; Secondary 05E18.

\noindent \emph{Keywords:} Hamiltonian Path, equivalence class, group action, Burnside's lemma.

\bigskip
\hrule
\bigskip

\end{document}